\numberwithin{equation}{section}
\theoremstyle{plain}
\newtheorem{lemma}{Lemma}
\theoremstyle{remark}
\DeclareMathOperator{\loc}{loc}
\def\be{\begin{equation}}
\def\ee{\end{equation}}
\def\ve{\varepsilon}
\def\vp{\varphi}
\def\arrowk{^\to{\kern -6pt\topsmash k}}
\def\arrowK{^{^\to}{\kern -9pt\topsmash K}}
\def\arrowr{^\to{\kern-6pt\topsmash r}}
\def\arrowvp{^\to{\kern -8pt\topsmash\vp}}
\def\arrowf{^{^\to}{\kern -8pt f}}
\def\arrowg{^{^\to}{\kern -8pt g}}
\def\arrowu{^{^\to}a{\kern-8pt u}}
\def\arrowt{^{^\to}{\kern -6pt t}}
\def\arrowe{^{^\to}{\kern -6pt e}}
\def\tk{\tilde{\kern 1 pt\topsmash k}}
\def\barm{\bar{\kern-.2pt\bar m}}
\def\barN{\bar{\kern-1pt\bar N}}
\def\barA{\, \bar{\kern-3pt \bar A}}
\begin{document}
\title{On the Hardy-Littlewood maximal function for the cube}

\author{Jean Bourgain}
\address{School of Mathematics, Institute for Advanced Study, 1
Einstein Drive, Princeton, NJ 08540.}
\email{bourgain\@math.ias.edu}
\thanks{The research was partially supported by NSF grants DMS-0808042 and
DMS-0835373.}
\begin{abstract}
It is sown that the Hardy-Littlewood maximal function associated to the cube in $\mathbb R^n$ obeys dimensional free bounds in $L^p$ for $p>1$.
Earlier work only covered the range $p>\frac 32$.
\end{abstract}

\maketitle
\centerline{(Dedicated to J.~Lindenstrauss)}

\section
{Introduction}
Let $B$ be a convex centrally symmetric body in $\mathbb R^n$ and define the corresponding maximal function
\be\label{1.1}
Mf(x) =M_Bf(x)=\sup_{t>0} \frac 1{|B|} \int_B|f(x+ty)|dy; f\in L^1_{\loc} (\mathbb R^n)
\ee
where $|B|$ denotes the volume of $B$.
For $1<p<\infty$, let $C_p(B)$ be the best constant in the inequality
\be\label{1.2}
\Vert M_Bf\Vert_p \leq C_p(B)\Vert f\Vert_p
\ee
while $C_1(B)$ is taken to satisfy the weak-type inequality
\be\label{1.3}
\Vert M_Bf\Vert_{1, \infty}\leq C_1(B)\Vert f\Vert_1.
\ee
Using the theory of spherical maximal functions, Stein \cite {St1} established the remarkable fact that for $B=B_2$ the Euclidean ball,
$C_p(B_2)$ may be bounded independently of the dimension $n$, for all $p>1$.
The author obtained the boundedness of $C_2(B)$ by an absolute constant, independently of $B$ as above (cf \cite {B1}) and this statement was generalized to
$C_p(B)$, $p>\frac 32$ in \cite{B2} and \cite{C}.
On the other hand, it is shown in \cite{S-S} that $C_1(B)\lesssim n\log n$ (see also \cite{N-T}).
Note that the constants $C_p(B)$ are invariant under linear transformation, i.e. $C_p(B)=C_p\big(u(B)\big)$ for $u\in CL_n(\mathbb R)$.
It is convenient to choose $u$ as to make $B$ isotropic, meaning that
\be\label{1.4}
\int_B|\langle x, \xi\rangle|^2 dx =L(B)^2 \text { for all unit vectors $\xi\in\mathbb R^n$.}
\ee

If $B$ is in isotropic position, then all $(n-1)$-dimensional central sections of $B$ are approximately of the same volume $\sim L(B)^{-1}$, up to absolute
constants, se \cite{B1} for details.
Recall at this point that $L(B)$  is known to be bounded from below by an absolute constant.
Conversely, the uniform bound from above is a well-known open problem with several equivalent formulations.
While such bounds were obtained for various classes of convex symmetric bodies (in particular zonoids), the best currently available general estimate on
$L(B)$ is $O(n^{1/4})$.
Interestingly, this issue did not impact the proofs of the dimension free bounds obtained in \cite {B1}, \cite{B2}, \cite{C}.
Next, following \cite{M}, denote by $Q(B)$ the maximum volume of an orthogonal $(n-1)$-dimensional projection of the isotropic position $S(B)$ of $B$.
That is
\be\label{1.5}
Q(B) =\max_\xi |\pi_\xi\big(S(B)\big)|
\ee
denoting $\pi_\xi$ the orthogonal projection on $\xi^\bot$, $\xi\in\mathbb R^n$ a unit vector.
It is proven in \cite{M} that for all $p>1$, one may estimate $C_p(B)$ in terms of $L(B)$ and $Q(B)$.
Consequently, \cite{M} obtains dimension free maximal bounds in the full range $p>1$ for $B=B_q=$ the unit ball with respect to the $\ell^q$-norm in $\mathbb
R^n$, provided $1\leq q < \infty$.
For $q=\infty$, one gets
\be\label{1.6}
Q(B_\infty) =\sqrt n
\ee
resulting in no further progress for the Hardy-Littlewood maximal function for the cube.

Still for the cube, a brakethrough was made recently in the works of Aldaz \cite {Al} and Aubrun \cite{Au}, that disprove a dimension free weak $(1, 1)$
maximal inequality for $B_\infty$.
More specifically, it is shown in \cite{Au} that
\be\label{1.7} 
C_1(B_\infty^{(n)}) > c(\ve)(\log n)^{1-\ve} \text { for all $\ve>0$}.
\ee
The purpose of this work is to prove that on the other hand

\noindent
{\bf Theorem.} \ {\sl
$C_p(B_\infty)<C_p$ for all $p>1$.
}

While it is reasonable to believe that this statement holds in general, our argument is based on a very explicit analysis which does not immediately carry
over to other convex symmetric bodies.
But the results of \cite{Al} and \cite{Au} are certainly inviting to a further study of $M_{B_\infty}$ which after all, together with $M_{B_2}$, is the most
natural setting of the Hardy-Littlewood maximal operator.

Let us next give a brief description of our approach.

Denote in the sequel $B=B_\infty^{(n)}= \big[-\frac 12, \frac 12\big]^n\subset\mathbb R^n$ with Fourier transform
\be\label{1.8}
m(\xi)=\hat 1_B(\xi) =\prod^n_{j=1} \frac{\sin \pi\xi_j}{\pi\xi_j}.
\ee
Then
\be
\label{1.9}
\int_Bf(x+ty)dy =\int_{\mathbb R^n} \hat f(\xi) m(t\xi) e(x.\xi)d\xi
\ee
(with notation $e(y) =e^{2\pi iy})$ which reduces matters to the study of the Fourier multiplier $m(\xi)$.
It satisfies in particular the estimates
\be\label{1.10}
|m(\xi)|<\frac C{|\xi|} \text { for } |\xi|\to\infty
\ee
and
\be \label{1.11}
|\langle\nabla m(\xi), \xi\rangle|<C.
\ee

In fact, \eqref{1.10}, \eqref{1.11} hold in general for isotropic convex symmetric $B$, $|B|=1$, replacing \eqref{1.10} by
\be\label{1.12}
|\hat 1_B(\xi)|< \frac C{L(B)|\xi|}.
\ee
See\cite{B1}.

The estimates \eqref{1.10}, \eqref{1.11} set the limitation $p>\frac 32$ in bounding  $\Vert M_B\Vert_p$. Following \cite {B2}
for instance, a faster decay in \eqref{1.10} would allow to reach smaller values of $p$.
Now a quick inspection of \eqref{1.8} shows, roughly speaking, that most of the time $m(\xi)$ decays much faster and the worst case scenario \eqref{1.10} only
occurs for $\xi$ confined to narrow conical regions along the coordinate axes.
Thus our strategy will consist in making suitable localizations in fourier space which contributions will be treated using different arguments.

For $\Omega\in L^1(\mathbb R^n)$, denote for $t>0$ the scaling $\Omega_t(x)=\frac 1{t^n}\Omega\big(\frac xt\big)$ satisfying
$\hat\Omega_t(\xi)=\hat\Omega(t\xi)$.
Denote $H$ the Gaussian distribution on $\mathbb R^n$, $\hat H(\xi)= e^{-|\xi|^2}$.
We make a decomposition
\be\label{1.13}
1_B=(1_B*H)+\sum^\infty_{s=1} \Omega^{(s)} \ \text { with } \,   \Omega^{(s)} =1_B* H_{2^{-s}} -1_B*H_{2^{-s+1}}
\ee
and consider the maximal function associated to each $\Omega^{(s)}$.

Recall the following simple $L^2$-estimate (Lemma 3 in \cite{B1}).

\begin{lemma}\label{Lemma1}
Consider a kernel $K\in L^1(\mathbb R^n)$ and introduce the quantities
$$
\alpha_j=\max_{|\xi|\sim 2^j} |\hat K(\xi)|\text { and } \beta_j= \max_{|\xi|\sim 2^j}|\langle \nabla\hat K(\xi), \xi\rangle|\quad (j\in\mathbb Z).
$$
Then
\be\label{1.14}
\Vert\sup_{t>0} |f* K_t| \ \Vert_2 \leq C\Gamma(K)\Vert f\Vert_2 
\ee 
with
\be\label{1.15}
\Gamma(K)=\sum_{j\in\mathbb Z} \alpha_j^{1/2} (\alpha_j+\beta_j)^{1/2}.
\ee
\end{lemma}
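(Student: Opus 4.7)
The plan is to localize $K$ dyadically in frequency and reduce the multi-scale maximal estimate to a family of single-scale estimates, each handled by a standard Sobolev-type embedding in the $t$-variable.

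Fix a smooth radial partition of unity $1=\sum_{j\in\mathbb Z}\chi_j(\xi)$ on $\mathbb R^n\setminus\{0\}$ with $\chi_j$ supported in $\{|\xi|\sim 2^j\}$ and $|\langle\xi,\nabla\chi_j(\xi)\rangle|\lesssim 1$ uniformly in $j$. Define $K^{(j)}$ by $\widehat{K^{(j)}}=\chi_j\hat K$. Since $\sup_{t>0}|f*K_t|\leq \sum_j\sup_{t>0}|f*K^{(j)}_t|$, it suffices to prove the single-scale bound
$$\|\sup_{t>0}|f*K^{(j)}_t|\|_2\lesssim \alpha_j^{1/2}(\alpha_j+\beta_j)^{1/2}\|f\|_2$$
with an implicit constant independent of $j$; summation in $j$ then yields the lemma.

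For the single-scale estimate fix $j$ and $x$, and set $g(t)=(f*K^{(j)}_t)(x)$. Since $\widehat{K^{(j)}}(t\xi)$ vanishes outside $\{t|\xi|\sim 2^j\}$, $g$ tends to $0$ at $t=0$ and $t=\infty$, so from $|g(t)|^2=2\Re\int_0^t g'(s)\overline{g(s)}\,ds$ and the Cauchy-Schwarz inequality one obtains
$$\sup_t|g(t)|^2 \leq 2\Big(\int_0^\infty|sg'(s)|^2\tfrac{ds}{s}\Big)^{1/2}\Big(\int_0^\infty|g(s)|^2\tfrac{ds}{s}\Big)^{1/2}.$$
Integrating in $x$, applying Cauchy-Schwarz in $x$, and invoking Plancherel (noting that $\widehat{s\partial_s K^{(j)}_s}(\xi)=\langle s\xi,\nabla\widehat{K^{(j)}}(s\xi)\rangle$), the bound reduces to the two uniform-in-$\xi$ estimates
$$\int_0^\infty|\widehat{K^{(j)}}(s\xi)|^2\tfrac{ds}{s}\lesssim \alpha_j^2,\qquad \int_0^\infty|\langle s\xi,\nabla\widehat{K^{(j)}}(s\xi)\rangle|^2\tfrac{ds}{s}\lesssim(\alpha_j+\beta_j)^2.$$

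The substitution $u=s|\xi|$ collapses both integrals to the range $u\sim 2^j$. On this range the first integrand is $\leq \alpha_j^2$ by definition. For the second, the product rule gives $\langle\eta,\nabla\widehat{K^{(j)}}(\eta)\rangle = \chi_j(\eta)\langle\eta,\nabla\hat K(\eta)\rangle+\hat K(\eta)\langle\eta,\nabla\chi_j(\eta)\rangle$, whose two summands are controlled by $\beta_j$ and $C\alpha_j$ respectively. Combining, $\|\sup_t|f*K^{(j)}_t|\|_2^2\lesssim \alpha_j(\alpha_j+\beta_j)\|f\|_2^2$, as desired. The only technical point worth flagging is the requirement that the radial derivative $\langle\xi,\nabla\chi_j(\xi)\rangle$ be uniformly bounded, which ensures that the passage from $\hat K$ to $\widehat{K^{(j)}}$ does not inflate the quantity measured by $\beta_j$; everything else is a routine use of Plancherel and Cauchy-Schwarz.
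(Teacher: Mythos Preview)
Your argument is correct and is precisely the standard proof of this estimate; the paper does not supply its own proof but merely cites \cite{B1}, Lemma~3, whose argument is the one you have written out (dyadic frequency localization followed by the $t$-Sobolev / fundamental-theorem-of-calculus trick and Plancherel). Nothing further to add.
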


Since
$$
\big|\widehat{\Omega^{(s)}} (\xi)|=|m(\xi)| \, \big| e^{-4^{-s}|\xi|^2}- e^{-4^{-s+1}|\xi|^2}\big|
$$
it follows that
\be\label{1.16}
\Vert\sup_{t>0} |f*(\Omega^{(s)})_t  | \ \Vert_2 \leq C2^{-s/2}\Vert f\Vert_2.
\ee
Taking $1<p<2$, our aim is to interpolate \eqref{1.16} with an estimate
$$
\Vert\sup_{t>0} |f* (\Omega^{(s)})_t| \, \Vert_p \leq A(p, s) \Vert f\Vert_p
$$
or
\be\label{1.17}
\Vert\sup_{t>0}|f*(1_B*H_{2^{-s}})_t| \, \Vert_p \leq A(p, s)\Vert f\Vert_p.
\ee
In order to establish \eqref{1.17}, we follow the approach in \cite{M}.

\begin{lemma}\label{Lemma2}
Assume the multiplier operator with multiplier
\be\label{1.18}
|\xi|m(\xi) e^{-4^{-s}|\xi|^2}
\ee
acts on $L^p(\mathbb R^n)$ with operator norm bounded by $A(p, s)$.
Then \eqref{1.17} holds with a proportional constant.
\end{lemma}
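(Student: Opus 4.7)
Let $K := 1_B * H_{2^{-s}}$, so that $\widehat{K}(\xi) = m(\xi)\,e^{-4^{-s}|\xi|^2}$, and write $F(r,x) := (f*K_r)(x)$. The plan is Stein's reduction of a convolution maximal function to two vertical $g$-functions. Since $F(r,\cdot)\to 0$ in $L^p$ as $r\to\infty$, integration by parts in $r$ gives
\[
|F(t,x)|^2 = -2\operatorname{Re}\int_t^\infty F(r,x)\,\overline{\partial_r F(r,x)}\,dr.
\]
Cauchy--Schwarz in the measure $dr/r$, after splitting $F(r) = (F(r)-f) + f$ to neutralize the $r\to 0$ divergence, yields the pointwise bound
\[
\sup_{t>0}|F(t,x)| \le |f(x)| + \sqrt 2\, G_1(x)^{1/2}\,G_2(x)^{1/2},
\]
where
\[
G_1(x)^2 = \int_0^\infty |F(r,x)-f(x)|^2\,\tfrac{dr}{r}, \qquad G_2(x)^2 = \int_0^\infty |r\,\partial_r F(r,x)|^2\,\tfrac{dr}{r}.
\]
H\"older in $L^{p/2}$ then reduces \eqref{1.17} to two estimates: $\|G_1\|_p \lesssim \|f\|_p$ (dimension-free) and $\|G_2\|_p \lesssim A(p,s)\,\|f\|_p$.

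The bound $\|G_1\|_p\lesssim \|f\|_p$ is classical dimension-free Littlewood--Paley. Since $|m|\le 1$, the multiplier $m(r\xi)\,e^{-4^{-s}r^2|\xi|^2}-1$ vanishes quadratically at $r\xi = 0$ and decays rapidly for large $r|\xi|$; hence $G_1$ is dominated pointwise by the $g$-function associated with the heat semigroup $e^{-4^{-s}r^2\Delta}$, whose $L^p$-boundedness holds with constants independent of $n$ by Stein's theorem for symmetric diffusion semigroups.

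For $G_2$, the Fourier multiplier of $r\,\partial_r F(r,\cdot)$ at scale $r$ is $\psi(r\xi)$, where
\[
\psi(\eta) = \langle\nabla m(\eta),\eta\rangle\, e^{-4^{-s}|\eta|^2}\; -\; 2\cdot 4^{-s}|\eta|^2\, m(\eta)\, e^{-4^{-s}|\eta|^2}.
\]
By \eqref{1.11} the first summand is dominated by $C\,e^{-4^{-s}|\eta|^2}$ and again gives a dimension-free $g$-function. The main summand is factored as
\[
4^{-s}|\eta|^2\, m(\eta)\, e^{-4^{-s}|\eta|^2} = \bigl[\,2^{-s}|\eta|\, e^{-4^{-s}|\eta|^2/2}\,\bigr]\cdot \bigl[\,2^{-s}|\eta|\, m(\eta)\, e^{-4^{-s}|\eta|^2/2}\,\bigr],
\]
where the first bracket is an explicit dimension-freely $L^p$-bounded radial Schwartz multiplier (of norm $O(2^{-s})$), and the second bracket differs from the hypothesis multiplier $|\eta|m(\eta)e^{-4^{-s}|\eta|^2}$ only by halving the Gaussian exponent (absorbed by a Gaussian-semigroup $L^p$-contraction) and by the scalar factor $2^{-s}$, so its $L^p$-operator norm is $\lesssim 2^{-s} A(p,s)$. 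A Khintchine--Rademacher randomization over the dyadic scales $r$, combined with the essentially disjoint frequency localization of the $\psi(r\,\cdot)$ in $r$, upgrades this product decomposition to the vector-valued $g$-function estimate $\|G_2\|_p \lesssim A(p,s)\|f\|_p$.

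\textbf{Main obstacle.} The delicate step is the final one: promoting a scalar $L^p$-multiplier hypothesis on $|\xi|m(\xi)e^{-4^{-s}|\xi|^2}$ to a vector-valued square-function bound on $G_2$ while keeping all constants independent of the dimension. The factorization above is engineered so that (i) the auxiliary factor is an explicit dimension-free radial bump with $L^p$-norm $O(2^{-s})$, and (ii) the mismatch between $e^{-4^{-s}|\eta|^2/2}$ and $e^{-4^{-s}|\eta|^2}$ is reconciled by convolving with the heat kernel at time $4^{-s}/2$, which is an $L^p$-contraction. Once these are in place, combining the three paragraphs through H\"older yields \eqref{1.17}.
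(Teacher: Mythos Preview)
Your approach via vertical $g$-functions is a known alternative to the analytic-interpolation route the paper actually takes (it simply invokes M\"uller's admissible analytic family, the only dimension-sensitive input being the moment bound $\sup_{|\xi|=1}\int|\langle x,\xi\rangle|^k(1_B*H_{2^{-s}})(x)\,dx\le C_k$, checked in (1.20) by Khintchine). In principle either route can work, but several steps in your sketch do not go through as written.

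First, the claim that $G_1$ is \emph{pointwise} dominated by the heat $g$-function is unjustified: an inequality $|\widehat{K_1}(\xi)|\le C|\widehat{K_2}(\xi)|$ on the Fourier side does not imply $|f*K_1(x)|\le C|f*K_2(x)|$ on the spatial side. The multiplier $m(r\xi)e^{-4^{-s}r^2|\xi|^2}-1$ is not of semigroup type, and bounding its $g$-function dimension-freely in $L^p$ requires precisely the moment information (1.20) that feeds M\"uller's argument; you have not supplied a substitute.

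Second, your Gaussian bookkeeping runs the wrong way: passing from the hypothesis multiplier $|\eta|m(\eta)e^{-4^{-s}|\eta|^2}$ to your second bracket $|\eta|m(\eta)e^{-4^{-s}|\eta|^2/2}$ means \emph{removing} Gaussian decay, i.e.\ multiplying by $e^{+4^{-s}|\eta|^2/2}$, which is not an $L^p$-contraction. (Also, the first bracket $2^{-s}|\eta|e^{-4^{-s}|\eta|^2/2}$ is a dilate of $u\mapsto|u|e^{-|u|^2/2}$ and has multiplier norm $O(1)$, not $O(2^{-s})$.)

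Third---and this is the real gap, which you yourself flag---the scalar-to-vector upgrade is not carried out. A scalar $L^p$-multiplier bound on $b(\eta)$ does \emph{not} automatically yield $\bigl\|(\int_0^\infty|T_{b(r\cdot)}f|^2\,dr/r)^{1/2}\bigr\|_p\lesssim A\|f\|_p$; such $L^p(\ell^2)$ extensions are not free for $p\ne2$. Your ``Khintchine plus essentially disjoint localization'' does not rescue this: $\psi$ is supported in the ball $|\eta|\lesssim 2^s$ with only quadratic vanishing at the origin, so its dyadic dilates overlap across roughly $s$ shells, and a Littlewood--Paley argument produces at best an extra factor growing in $s$. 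The factorization $\psi=a\cdot b$ does not help either: even granting a dimension-free $g$-function bound for the explicit radial piece $a$, you would still need the $r$-dependent family $T_{b(r\cdot)}$ to act boundedly on $L^p\bigl(L^2(dr/r)\bigr)$, which is exactly the vector-valued statement you are trying to obtain. The analytic-interpolation machinery in \cite{M} is designed precisely to bypass this obstruction.
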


The statement follows from the argument in \cite {M}, based on analytic interpolation and a suitable admissible family of Fourier multiplier operators.

In the present situation, rather than taking $K=1_B$ in (9) of \cite {M}, we let $K=1_B*H_{2^{-s}}$.
It is important to note that in the crucial Lemma \ref{Lemma2} from \cite{M}, only the bound on $L(B)$ is required but not on $Q(B)$ (which enters at a later stage).
In fact, the essential input in \cite{M}, Lemma \ref{Lemma2} are bounds on
$$
\sup_{|\xi|=1} \int_B|\langle x, \xi\rangle|^k dx
$$
for fixed $k\geq 1$.
In our setting, we obtain
\be\label{1.19}
\int|\langle x, \xi\rangle|^k (1_{B_\infty}*H_{2^{-s}})(x)dx
\ee
which is easily evaluated.
Indeed, since the distribution $1_{B_\infty}*H_{2^{-s}}$ is symmetric in each coordinate $x_i$, application of Khintchine's inequality implies 
for $|\xi|=1$
$$
\begin{aligned}
(1.19) &< C_k\int\Big(\sum^n_1 x_i^2 \xi_i^2\Big)^{\frac k2} (1_B* H_{2^{-s}})(x)dx\\
&\leq C_k\iint \Big[\Big(\sum(x_i-y_i)^2 \xi_i^2\Big)^{\frac 12}+|\xi|\Big]^k 1_B(y)H_{2^{-s}}(x-y)dxdy\\
& <C_k+C_k\int\Big(\sum x_i^2\xi_i^2\Big)^{\frac k2} H_{2^{-s}}(x)dx\\
&=C_k+C_k 2^{-sk}\int\Big(\sum x_i^2\xi_i^2\Big)^{\frac k2} H(x)dx\\
&<C_k
\end{aligned}
\eqno{(1.20)}
$$
for $s\geq 0$.

Returning to \eqref{1.18}, we proceed further as in \cite{M}, writing
$$
|\xi|m(\xi)e^{-4^{-s}|\xi|^2}\hat f(\xi) =\sum^n_{i=1} \widehat{R_if}(\xi)\widehat{\mu_i} (\xi)
\eqno{(1.21)}
$$
with $\mathcal R_i$ the $i^{th}$ Riesz transform and $\mu_i=\partial_{x_i} (1_B*H_{2^{-s}})$.

Arguing by duality, take $g\in L^{p'}(\mathbb R^n)$, $\frac 1p+\frac 1{p'}=1$, $\Vert g\Vert_{p'}\leq 1$ and
estimate
$$
\begin{aligned}
\int|\xi|m(\xi) e^{-4^{-s}|\xi|^2} \hat f(\xi)\hat g(\xi) d\xi&= \sum^n_{i=1} \langle R_i f, g*u_i\rangle\\
&\leq \Big\Vert\Big(\sum |R_i f|^2)^{\frac 12}\Vert_p. \Big\Vert\Big(\sum|g*\mu_i|^2\Big)^{\frac 12}\Big\Vert_{p'}.
\end{aligned}\eqno{(1.22)}
$$
For the first factor in (1.22), use Stein's dimensional free bound on the Riesz transform (see \cite{St2})
$$
\Big\Vert \Big(\sum|R_i f|^2\Big)^{\frac 12}\Big\Vert_p\leq A_p \Vert f\Vert_p \ \text { for } \ 1<p<\infty.\eqno{(1.23)}
$$
This reduces the issue to an estimate on
$$
\Big\Vert\Big( \sum |g*\mu_i|^2\Big)^{\frac 12}\Big\Vert_p\eqno{(1.24)}
$$
for $ 2\leq p<\infty$.

Note that since $\big(\sum_i |\widehat{\mu_i}(\xi)|^2\big)^{\frac 12} \leq |\xi| \, |m(\xi')|< C $  by (1.10),
$$
\Big\Vert\Big(\sum |g*\mu_i|^2\Big)^{\frac 12} \Big\Vert_2 \leq C\Vert g\Vert_2.\eqno{(1.25)}
$$
Bounding (1.24) for $p=\infty$ amounts to an estimate on
$$
\sup_{|\eta|=1} \Big\Vert\sum^n_{i=1} \eta_i\mu_i\Big\Vert_1 =\Vert\nabla_\eta (1_B*H_{2^{-s}})\Vert_1.\eqno{(1.26)}
$$
Clearly
$$
(1.26) \leq \Vert\nabla_\eta (H_{2^{-s}})\Vert_1 =2^s\Vert\langle\nabla H, \eta\rangle\Vert_1 \lesssim 2^s\eqno{(1.27)}
$$
implying
$$
\Big\Vert\Big(\sum |g*\mu_i|^2\Big)^{\frac 12} \Big\Vert_\infty \leq C2^s\Vert g\Vert_\infty
$$
and
$$
\Big\Vert\Big(\sum|g*\mu_i|^2\Big)^{\frac 12} \Big\Vert_p \leq C2^{s(1-\frac 2p)}\Vert g\Vert_p \ \text { for } \ 2\leq p\leq\infty.\eqno{(1.28)}
$$
In particular, (1.17) holds with $A(p, s)< C_p2^s$ and interpolation with (1.16) is conclusive for $p>\frac 32$.

In order to prove the Theorem, it will suffice to establish an inequality of the form

\begin{lemma}\label{Lemma3}
For $R>1$ and $\mu_i=\partial_i(1_{B_\infty}* H_{\frac 1R})$, there is an inequality
$$
\Big\Vert\Big(\sum^n_{i=1} |f*\mu_i|^2\Big)^{\frac 12} \Big\Vert_p\leq C_p(\ve)R^\ve\Vert f\Vert_p\eqno{(1.29)}
$$
for all $2\leq p<\infty$ and $\ve<0$.
\end{lemma}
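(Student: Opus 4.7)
The pointwise identity
$$\Bigl(\sum_{i=1}^n |f*\mu_i|^2\Bigr)^{1/2}(x) = |\nabla(f*G)(x)|,\qquad G:=1_{B_\infty}*H_{1/R},$$
reduces the task to a dimension-free $L^p$ gradient estimate for $f*G$ with a loss of $R^\varepsilon$. Following the paper's observation that the worst-case decay $|m(\xi)|\sim 1/|\xi|$ only occurs inside narrow conical regions about the coordinate axes, I would introduce a smooth partition of unity $1 = \chi_0(\xi) + \sum_{j=1}^n \chi_j(\xi)$, where $\chi_j$ is supported in an axial cone $\Gamma_j = \{\xi:|\xi_j|\geq (1-\delta)|\xi|\}$ about $e_j$ and $\chi_0$ covers the generic region where no single coordinate dominates. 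The cones are chosen narrow enough to have essentially disjoint Fourier supports, and are coupled with a dyadic Littlewood--Paley decomposition in $|\xi|$ (with at most $O(\log R)$ relevant scales, due to the Gaussian cutoff $e^{-|\xi|^2/R^2}$).

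On the generic region several coordinates of $\xi$ contribute significantly to $|\xi|^2$, and the product formula $m(\xi) = \prod_k \operatorname{sinc}(\pi\xi_k)$ combined with $|\operatorname{sinc}(\pi t)|\leq \min(1, 1/(\pi|t|))$ yields improved decay of $|m(\xi)|$, so the associated vector multiplier is Riesz-like of size $\lesssim 1/|\xi|$; this piece is bounded dimension-freely on $L^p$ by arguments in the spirit of Stein's dimension-free Riesz-transform theory. On each axial cone $\Gamma_j$ the $\ell^2$-norm of the vector multiplier is essentially carried by the $i=j$ slot (since $|\xi_i|\ll |\xi_j|$ for $i\neq j$ in a narrow cone), and the tensor factorization
$$\widehat{\mu_j}(\xi) = 2i\sin(\pi\xi_j)e^{-\xi_j^2/R^2}\prod_{k\neq j}\operatorname{sinc}(\pi\xi_k)e^{-\xi_k^2/R^2}$$
exhibits the axial piece as a bounded $1$-D multiplier in the $j$-th variable (whose kernel $\psi$ has $\|\psi\|_1 = O(1)$) tensored with smooth projections in the transverse coordinates. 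The $n$ axial contributions are then combined dimension-freely via an almost-orthogonality estimate (in the spirit of Rubio de Francia's theorem for multipliers with disjoint frequency supports on $L^p$, $p\geq 2$).

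\textbf{Main obstacle.} The crux is the dimension-free combination of the $n$ axial contributions: a naive summation incurs a factor of $n$ from the number of axes, and ruling this out requires an $L^p$ almost-orthogonality inequality respecting the tensor-product structure of the $\mu_j$, coupled with a careful choice of the cutoffs $\chi_j$ (narrow enough for disjoint Fourier supports, wide enough to cover Fourier space together with the generic region). The logarithmic losses from the Littlewood--Paley scale decomposition and from the cutoff construction are absorbed into the $R^\varepsilon$ factor.
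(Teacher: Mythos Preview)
Your outline correctly isolates the difficulty --- the worst behaviour of $m(\xi)$ lives near the coordinate axes --- but the proposal has a genuine gap precisely at the step you flag as the main obstacle, and the fix you suggest does not work. An appeal to ``Rubio de Francia for disjoint frequency supports'' is not available here: Rubio de Francia's inequality concerns intervals on $\mathbb{R}$, and there is no dimension-free analogue for $n$ cones in $\mathbb{R}^n$. Concretely, even if each axial piece $P_jT_jf$ (with $P_j$ the cone cutoff and $T_j$ the tensorized $1$-D multiplier) satisfies $\Vert P_jT_jf\Vert_p\leq C\Vert f\Vert_p$ uniformly in $j$, the only general bound for $p>2$ is
\[
\Big\Vert\Big(\sum_{j=1}^n |P_jT_jf|^2\Big)^{1/2}\Big\Vert_p^2=\Big\Vert\sum_j|P_jT_jf|^2\Big\Vert_{p/2}\leq \sum_j\Vert P_jT_jf\Vert_p^2,
\]
which costs a factor $n$. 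Disjointness of the cones in Fourier space does not by itself produce $L^p$ almost-orthogonality for $p\neq 2$; you would need smooth Fourier projections onto the cones that are themselves bounded on $L^p(\mathbb{R}^n)$ with dimension-free constants, and no such family is supplied. The ``generic region'' piece is also underspecified: writing $\xi_i m(\xi)=(\xi_i/|\xi|)\cdot|\xi|m(\xi)$ and invoking Stein's Riesz-transform bound still leaves you with the scalar multiplier $|\xi|m(\xi)\chi_0(\xi)$, whose dimension-free $L^p$ boundedness for $p>2$ is essentially the content of the lemma you are trying to prove.

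The paper's route is quite different and avoids angular cones altogether. It localizes via the operators $A_k=\sum_{|S|=k}\prod_{j\notin S}T_j\prod_{j\in S}(1-T_j)$, where $T_j$ is convolution by $\eta_{R^{-\ve}}$ in the $j$-th variable; the needed dimension-free bound $\Vert A_k\Vert_p\leq C_p^k$ comes from Pisier's holomorphic semi-group theorem. After splitting $f=(A_0+\cdots+A_K)f+g$, the tail $g$ is handled by $L^2$--$L^p$ interpolation. The pieces $A_kf$ are reduced, via a randomized decoupling, to the two model inequalities \eqref{2.28}--\eqref{2.29}; these are treated with auxiliary convolutions $L_j$ (by $\vp_{t_0}$, chosen for its approximate translation-invariance) and a \emph{bootstrap}: the unknown constant $A_p(R)$ reappears on the right-hand side multiplied by $R^{-2\ve/p}$, and one solves $A_p(R)\leq C(p,\ve)(R^{8\ve}+A_p(R)R^{-2\ve/p})$ to conclude. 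The crucial square-function estimate $\Vert(\sum_i|\Gamma_if|^2)^{1/2}\Vert_p\leq C_p\Vert f\Vert_p$ --- morally the ``combination of axial pieces'' you are missing --- is obtained not from Fourier disjointness but from the semi-group bound on $\sum_{i\in I}\Gamma_i$.
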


The proof of Lemma \ref{Lemma3} will occupy the remainder of the paper.
As mentioned before, the explicit form of $m(\xi)$ is essential in the argument.
In the next section, we introduce a new collection of Fourier multiplier operators that will enable to perform certain localizations in Fourier space.
The proof of (1.29) will then proceed by analyzing the expression $\big(\sum_i |f*\mu_i|^2\big)^{\frac 12}$ on each of these regions.

\section{Localization in Fourier space}

The following statement is a particular instance of Pisier's holomorphic semi-group theorem in $B$-convex spaces (\cite{P}).

\begin{lemma}\label{Lemma4}
Denote $\mathbb E_j$ a conditional expectation operator acting on the $j^{th}$ variable of $\mathbb R^n$.
Then, for $1<p<\infty$, the semi-group
\be \label{2.1}
S_t =\prod^n_{j=1} \big(\mathbb E_j+e^{-t} (1-\mathbb E_j)\big) \quad (t\geq 0)
\ee
acting on $L^p(\mathbb R^n)$ admits a holomorphic extension.
Hence, for \hbox{$0\leq k\leq n$} the operator
\be\label{2.2}
\sum_{\substack{S\subset\{1, \ldots, n\}\\ |S|=k}} \Big(\prod_{j\not\in S} \mathbb E_j\Big) \prod_{j\in S} (1-\mathbb E_j)
\ee
acts on $L^p(\mathbb R^n)$ with norm bounded by $C_p^k$.
\end{lemma}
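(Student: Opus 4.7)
The plan is to regard $S_t$ as a semigroup of contractions on $L^p(\mathbb{R}^n)$, extend it holomorphically to a complex sector via Pisier's theorem, and then extract the operators appearing in \eqref{2.2} as Taylor coefficients of the resulting polynomial in $e^{-z}$ by a contour integral.

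First I would check that each one-dimensional factor $\mathbb{E}_j + e^{-t}(1-\mathbb{E}_j)$ is a contraction on $L^p(\mathbb{R}^n)$: it is a convex combination of the identity and a conditional expectation, both of which are $L^p$-contractions. Since the factors commute (each is a polynomial in the single projection $\mathbb{E}_j$, and the projections $\mathbb{E}_j$ corresponding to different coordinates commute), the product $S_t$ is a well-defined $C_0$-semigroup of contractions on $L^p(\mathbb{R}^n)$. This puts us squarely in the setting of Pisier's holomorphic semigroup theorem in $B$-convex spaces. The crucial point is that $L^p$ is uniformly $B$-convex with modulus depending only on $p$ (and not on $n$), so Pisier's theorem yields a holomorphic extension $z\mapsto S_z$ to a sector $\Sigma_{\theta_p} = \{z\in\mathbb{C}:|\arg z|<\theta_p\}$ of angle $\theta_p>0$, together with a uniform bound $\sup_{z\in\Sigma_{\theta_p}} \|S_z\|_{p\to p} \leq M_p$, both independent of $n$.

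Next I would expand the product in \eqref{2.1} and use the orthogonality $\mathbb{E}_j(1-\mathbb{E}_j)=0$ within each coordinate to obtain, for real $t\geq 0$, the identity
\[
S_t = \sum_{k=0}^{n} e^{-tk}\, T_k,
\qquad
T_k = \sum_{|S|=k}\Big(\prod_{j\notin S}\mathbb{E}_j\Big)\prod_{j\in S}(1-\mathbb{E}_j).
\]
By analytic continuation this identity persists throughout $\Sigma_{\theta_p}$. Setting $\omega = e^{-z}$ turns the right-hand side into a polynomial $P(\omega) = \sum_{k=0}^n T_k\omega^k$ of degree $n$, so $T_k$ is its $k$-th Taylor coefficient. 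I would then choose $r\in(0,1)$ small enough that the circle $|\omega|=r$ pulls back to a horizontal segment $\{-\log r+i\theta : |\theta|\leq \pi\}$ lying entirely inside $\Sigma_{\theta_p}$; it suffices to take $|\log r|\geq \pi\cot\theta_p$. Cauchy's formula
\[
T_k = \frac{1}{2\pi i}\oint_{|\omega|=r}\frac{P(\omega)}{\omega^{k+1}}\,d\omega
\]
then yields $\|T_k\|_{p\to p} \leq r^{-k} M_p$, which is precisely of the desired form $C_p^k$ upon absorbing $M_p$ into the base of the exponential.

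The only nonroutine ingredient is the appeal to Pisier's theorem, and the delicate point there is \emph{dimensional uniformity}. One must use the $B$-convex version rather than, say, Stein's symmetric diffusion framework, precisely so that the opening angle $\theta_p$ of the sector and the bound $M_p$ do not degenerate as $n\to\infty$. Once that input is granted, the rest is the standard device of reading off the spectral projections of a bounded holomorphic semigroup by a contour integral on a circle of radius $r<1$ whose log-image sits inside the sector of holomorphy.
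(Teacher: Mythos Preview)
Your argument is correct and is precisely the intended one: the paper offers no proof beyond the sentence ``The following statement is a particular instance of Pisier's holomorphic semi-group theorem in $B$-convex spaces,'' and you have supplied exactly the missing details---verifying that $S_t$ is a contraction semigroup, invoking Pisier for a dimension-free holomorphic extension to a sector $\Sigma_{\theta_p}$, expanding $S_t=\sum_k e^{-tk}T_k$, and extracting $T_k$ by a Cauchy integral on a circle $|\omega|=r$ whose preimage lies in the sector.

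One small remark: your final comment contrasting Pisier with Stein's symmetric-diffusion framework is not quite on target. The semigroup $S_t$ here \emph{is} a symmetric diffusion semigroup (self-adjoint on $L^2$, positivity-preserving, Markovian, contractive on every $L^p$), and Stein's interpolation argument would also give a sector of holomorphy with angle depending only on $p$, hence a dimension-free $C_p$. The paper's preference for Pisier is presumably because his $B$-convexity formulation is the cleanest way to state the result and makes the dependence on $p$ alone completely transparent; but either route works, and the contour-integral extraction of $T_k$ is identical in both.
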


We may replace the expectation operators $\mathbb E_j$ by convolution operators using a standard averaging procedure over translations.
Let $\mathbb E$ be the expectation operator corresponding to the partition of $\mathbb R$ in the intervals $[k, k+1[$, $k\in\mathbb Z$.
Thus its kernel is given by
$$
\Phi (x, y) =\sum_{k\in\mathbb Z} 1_{[k, k+1[} (x) 1_{[k, k+1[}(y).
$$
Averaging over translations, the operator $\int_0^1(\tau_\theta\mathbb E\tau_{-\theta}) d\theta$ is the convolution by $\eta =1_{[0, 1[}*1_{[0, 1[}$, i.e.
\be\label{2.3}
\eta(x) =(1-|x|)_+.
\ee

Lemma \ref{Lemma4} convexity therefore implies

\begin{lemma}\label{Lemma5}
Let $\eta$ be as in \eqref{2.3} and $(t_j)_{1\leq j\leq n}$ positive numbers.
Denote $T_j$ the convolution operator by $\eta_{t_j}$ in the $j$-variable.
Then, for \hbox{$0\leq k\leq n$}, the operator
\be\label{2.4}
\sum_{\substack {S\subset \{1, \ldots, n\}\\ |S|=k}} \ \prod_{j\not\in S} T_j \prod_{j\in S} (1-T_j)
\ee
acts on $L^p(\mathbb R^n)$, $1<p<\infty$, with norm bounded by $C_p^k$.
\end{lemma}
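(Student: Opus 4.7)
The plan is to realize each $T_j$ (and each $1-T_j$) as an average over translations of a genuine conditional expectation operator on the $j$-th variable, and then apply Lemma \ref{Lemma4} pointwise in the averaging parameter.

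For each $j$ and each $\theta\in[0,t_j)$, let $\mathbb{E}_j^{\theta}$ denote the conditional expectation on $L^p(\mathbb R^n)$ with respect to the partition of the $j$-th axis into the intervals $[kt_j+\theta,(k+1)t_j+\theta)$, $k\in\mathbb Z$. A direct computation of kernels --- the scaled and translated form of the identity $\int_0^1\tau_\theta\mathbb{E}\tau_{-\theta}\,d\theta =$ convolution by $\eta$ that is sketched in the text just before the lemma --- yields
$$
T_j=\frac{1}{t_j}\int_0^{t_j}\mathbb{E}_j^{\theta}\,d\theta,\qquad 1-T_j=\frac{1}{t_j}\int_0^{t_j}(1-\mathbb{E}_j^{\theta})\,d\theta.
$$
Since operators acting on distinct coordinates commute, these identities can be multiplied out (Fubini) to give, for each $S\subset\{1,\ldots,n\}$,
$$
\prod_{j\notin S}T_j\prod_{j\in S}(1-T_j)=\frac{1}{t_1\cdots t_n}\int_{[0,t_1]\times\cdots\times[0,t_n]}\prod_{j\notin S}\mathbb{E}_j^{\theta_j}\prod_{j\in S}(1-\mathbb{E}_j^{\theta_j})\,d\theta_1\cdots d\theta_n.
$$

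Summing over $|S|=k$ and interchanging the (finite) sum with the integral, the operator \eqref{2.4} is exhibited as the average over $\theta=(\theta_1,\ldots,\theta_n)$ of
$$
A(\theta)\;=\;\sum_{\substack{S\subset\{1,\ldots,n\}\\|S|=k}}\prod_{j\notin S}\mathbb{E}_j^{\theta_j}\prod_{j\in S}(1-\mathbb{E}_j^{\theta_j}).
$$
For every fixed $\theta$, the operator $A(\theta)$ is precisely an instance of \eqref{2.2}: each $\mathbb{E}_j^{\theta_j}$ is a bona fide conditional expectation on the $j$-th variable, so Lemma \ref{Lemma4} bounds $\|A(\theta)\|_{L^p\to L^p}\le C_p^{k}$, and crucially this bound depends on neither the scales $t_j$ nor the shifts $\theta_j$. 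Minkowski's inequality for operator-valued integrals, combined with the normalization $\tfrac{1}{t_1\cdots t_n}\int_{\prod_j[0,t_j]}d\theta=1$, then gives the claimed $L^p\to L^p$ bound $C_p^{k}$ for \eqref{2.4}.

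The only non-routine step is the averaging identity for $T_j$ and $1-T_j$; everything else is formal once one has the scale- and shift-independence of the constant in Lemma \ref{Lemma4}. That averaging identity is just the explicit kernel computation reproduced (in one dimension, unscaled) in the paragraph preceding Lemma \ref{Lemma5}, after the trivial replacement of the unit partition of $\mathbb R$ by the partition into intervals of length $t_j$ and a shift by $\theta_j$.
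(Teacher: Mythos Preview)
Your proof is correct and is precisely the argument the paper has in mind: the paper's entire proof is the phrase ``Lemma~\ref{Lemma4} convexity therefore implies,'' together with the one-dimensional averaging identity $\int_0^1 \tau_\theta\mathbb E\tau_{-\theta}\,d\theta=\,*\,\eta$ stated just before the lemma. You have simply written out the details --- the scaling to interval length $t_j$, the product over coordinates via Fubini, and the final Minkowski step --- that the paper leaves implicit.
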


Returning to Lemma \ref{Lemma3}, set $t_j=t=R^{-\ve}$ for each $j=1, \ldots n$, with $\ve>0$ and a fixed small constant.
Denote $A_k$ the corresponding convolution operator \eqref{2.4},
which satisfies
\be\label{2.5}
\Vert A_k\Vert_p < C_p^k \ \text { for } \ 1<p<\infty.
\ee
Let $K=K(\ve, p)\in \mathbb Z_+$ and decompose $f$ as
\be\label{2.6}
f =(A_0+\cdots+ A_K) f+g.
\ee

Going back to the $L^2$-inequality (1.25), we obtain from Parseval that
\be\label{2.7}
\Big\Vert\Big[\sum^n_{i=1} |g*\mu_i|^2\Big]^{\frac 12}\Big\Vert_2\leq \rho \Vert f\Vert_2
\ee
with $\rho$ an upper bound on
\be\label{2.8}
\begin{aligned}
|m(\xi)| e^{-4^{-s}|\xi|^2} |1-\hat A_0(\xi)-\cdots-\hat A_K(\xi)|=\\
\prod^n_{j=1} \Big|\frac {\sin \pi\xi_j}{\pi\xi_j}\Big| e^{-4^{-s}|\xi|^2} \sum_{|S|>K} \, \prod_{j\not\in S} \hat\eta (t\xi_j) \prod_{j\in S}\big(1-\hat\eta(t\xi_j)\big).\\
&{}
\end{aligned}
\ee

\begin{lemma}\label{Lemma6}
For all $\delta>0$ and $k\geq 1$,
\be \label{2.9}
|m(\xi)|< C_k\Big(1+\sum_{|\xi_j|<R^\delta} \xi_j^2\Big)^{-\frac k2} R^{\delta k}
\ee
\end{lemma}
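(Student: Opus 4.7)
The plan is to tensorize a one-dimensional pointwise sinc bound and then convert the resulting Gaussian-type decay of $|m(\xi)|$ into polynomial decay at the expense of a factor $R^{\delta k}$.

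First I would establish the one-variable estimate
$$\left|\frac{\sin\pi x}{\pi x}\right|^{2}\leq\frac{1}{1+cx^{2}}\qquad(x\in\mathbb{R})$$
for an absolute constant $c>0$, which reduces to an elementary calculus check: the Taylor expansion at $x=0$ permits any $c\leq\pi^{2}/3$, and the tail bound $|\sin\pi x/(\pi x)|\leq 1/(\pi|x|)$ handles $|x|$ large. Discarding the factors of $|m(\xi)|$ indexed by $j\notin J:=\{j:|\xi_{j}|<R^{\delta}\}$, each of which is at most $1$, then gives $|m(\xi)|^{2}\leq\prod_{j\in J}(1+c\xi_{j}^{2})^{-1}$.

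Next, using $\log(1+t)\geq t/(1+t)$ together with the bound $1+c\xi_{j}^{2}\leq M:=1+cR^{2\delta}$ available for $j\in J$, and writing $N=1+\sum_{j\in J}\xi_{j}^{2}$, I would obtain
$$-\log|m(\xi)|^{2}\geq\sum_{j\in J}\log(1+c\xi_{j}^{2})\geq\frac{c}{M}\sum_{j\in J}\xi_{j}^{2}=\frac{c(N-1)}{M},$$
i.e.\ a Gaussian-type decay of $|m(\xi)|$ in $N$ with rate $\asymp R^{-2\delta}$.

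Finally, to convert this exponential decay into the claimed polynomial rate $N^{-k/2}$ at the cost of the factor $R^{\delta k}$, I would apply the elementary inequality $\log N\leq\beta N-1-\log\beta$ ($N,\beta>0$) with the choice $\beta=c/(kM)$. The target estimate
$$\frac{c(N-1)}{2M}\geq\frac{k}{2}\log N-\delta k\log R-\log C_{k}$$
then reduces to a constraint involving only $\log M$; since $\log M\leq 2\delta\log R+\log(1+c)$ when $R\geq 1$, the $R$-dependence cancels exactly, producing $C_{k}\lesssim(k/c)^{k/2}$, depending on $k$ alone. The only delicate point is this cancellation of the $R$-dependence — the choice $\beta=c/(kM)$ is essentially forced by it — whereas everything else is routine once the one-dimensional sinc bound is in place.
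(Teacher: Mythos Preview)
Your proof is correct and follows essentially the same strategy as the paper: obtain a Gaussian-type bound $|m(\xi)|\lesssim e^{-cR^{-2\delta}(N-1)}$ with $N=1+\sum_{|\xi_j|<R^\delta}\xi_j^2$, then convert exponential decay into the polynomial rate $N^{-k/2}$ at the cost of the factor $R^{\delta k}$. The only difference is cosmetic: the paper reaches the Gaussian bound by splitting at $|\xi_j|=1$ (using $|\mathrm{sinc}\,\pi\xi_j|\le e^{-c\xi_j^2}$ for $|\xi_j|\le 1$ and $|\mathrm{sinc}\,\pi\xi_j|\le e^{-c}$ for $|\xi_j|>1$, then noting $N-1\le R^{2\delta}|I_0|+\sum_{j\notin I_0}\xi_j^2$), whereas you use the unified one-variable bound $|\mathrm{sinc}\,\pi x|^2\le (1+cx^2)^{-1}$ together with $\log(1+t)\ge t/(1+t)$ --- a slightly longer route to the same exponential estimate.
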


\begin{proof}

Denoting $I_0=\{ j=1, \ldots, n; |\xi_j|>1\}$, clearly
$$
\prod_{j\not\in I_0} \Big|\frac {\sin \pi\xi_j}{\pi\xi_j}\Big| < e^{-c\sum_{j\not\in I_0}\xi_j^2}
$$
while
$$
\prod_{j\in I_0} \Big|\frac {\sin \pi \xi_j}{\pi\xi_j}\Big|< e^{-c|I_0|}.
$$
Estimating
$$
\sum_{|\xi_j|<R^\delta} \xi_j^2 < R^{2\delta} |I_0|+\sum_{j\not\in I_0}\xi_j^2
$$
\eqref{2.9} follows.
\end{proof}

Returning to \eqref{2.8}, set $I_1=\{j=1, \ldots, n; |\xi_j|>R^{\frac \ve 5}\}$.
If $|I_1|>\frac K2$,
\be\label{2.10}
\eqref{2.8} \leq |m(\xi)|< R^{-\frac{\ve K}{10}}.
\ee
Assume $|I_1|\leq\frac K2$ and bound \eqref{2.8} by
\begin{alignat}{1}\label{2.11}
&|m(\xi)| \Big\{\sum_{\substack {S\cap I_1 =\phi\\ |S|\geq [\frac K2]}} \prod_{j\not\in S}\hat\eta (t\xi_j). \prod_{j\in S}
\big(1-\hat\eta (t\xi_j)\big)\Big\}=\notag \\
&|m(\xi)| \, \Big|\partial_r^{([\frac K2])} \Big[\prod_{\substack {1\leq j\leq n\\ j\not\in I_1}} \big(\hat \eta (t\xi_j)+r\big(
1-\hat\eta(t\xi_j)\big)\big)\Big]\Big|_{r=1}\Big|\leq\notag\\
&|m(\xi)| \, \Big[\sum_{j\not\in I_1} (1-\hat\eta(t\xi_j)\big) \Big]^{[\frac K2]}\lesssim\notag\\
&|m(\xi)| t^{2[\frac K2]}\Big[\sum_{j\not\in I_1} \xi_j^2\Big]^{[\frac K2]}\lesssim\notag\\
&R^{-2\ve[\frac K2]} R^{\frac 25 \ve[\frac K2]}\lesssim R^{-\ve[\frac K2]}.
\end{alignat}

Since $t=R^{-\ve}$, $ 1-\hat\eta(x)< cx^2$ for $|x| <1$ and \eqref{2.9}.

Combining \eqref{2.10}, \eqref{2.11}, it follows that we may take $\rho=R^{-\frac{\ve K}{10}}$ in \eqref{2.7}.

Since by (1.28), certainly 
\be\label{2.12}
\Big\Vert\Big(\sum^n_{i=1} |g*\mu_i|^2\Big)^{\frac 12}\Big\Vert_p \leq C_pR\Vert f\Vert_p \ \text { for } \ 1<p<\infty
\ee
interpolation between \eqref{2.7}, \eqref{2.12} implies that
\be\label{2.13}
\Big\Vert\Big(\sum^n_{i=1} |g*\mu_i|^2\Big)^{\frac 12}\Big\Vert_p\leq \Vert f\Vert_p
\ee
provided we choose $K=K(\ve, p)$ appropriately.

Thus we are left with estimating (1.24) for $g=A_kf$, $k\leq K$, that will be done using different arguments.

Write
\be\label{2.14}
\Big(\sum^n_{i=1} |A_k f*\mu_i|^2\Big)^{\frac 12}\leq \Big(\sum^n_{i=1}\Big|\sum_{\substack{|S|=k\\ i\not\in S}} 
\Gamma_S f*\mu_i\Big|^2\Big)^{\frac 12}
\ee
\be
\label{2.15}
\qquad\qquad +\Big(\sum^n_{i=1} \Big|\sum_{\substack{|S|=k\\ i\in S}} \Gamma_S f *\mu_i\Big|^2\Big)^{\frac 12}
\ee
denoting
$$
\Gamma_S=\prod_{j\in S} (1-T_j).\, \prod_{j\not\in S} T_j
$$
and $T_j$ the convolution by $\eta_t$ in $x_j$.

Any significant simplification is obtained by decoupling the variables in (2.14), (2.15).
We recall the procedure.

Let $(\gamma_i)_{1\leq i\leq n}$ be independent $\{0, 1\}$-valued random variables of mean $\frac 1k$ say and for $S\subset\{1, \ldots,
n\}$,
$|S|=k$ and $i\not\in S$, let
\be\label {2.16}
\sigma_{S, i} =\gamma_i \prod_{j\in S} (1-\gamma_j).
\ee
By construction
$$
\mathbb E_\omega[\sigma_{S, i}] =\frac 1k\Big(1-\frac 1k\Big)^k=c_k.
$$
Hence, by convexity
\begin{alignat}{1}\label{2.17}
\eqref{2.14}&\leq c_k^{-1} \mathbb E_\omega\Big[\Big(\sum^n_{i=1}\Big|\sum_{|S|=k, i\not\in S} \sigma_{S, i}(\omega) 
\big(\Gamma_S f*\mu_i\big)\Big|^2\Big)^{\frac 12} \Big]\notag\\
\Vert\eqref{2.14}\Vert_p& \leq c_k^{-1} \Big\Vert\Big(\sum^n_{i=1}\Big|\sum_{|S|=k, i\not\in S} \sigma_{S, i}(\omega)
\big(\Gamma_S f*\mu_i\big)\Big|^2\Big)^{\frac 12}\Big\Vert_p
\end{alignat}
for some $\omega$.
Denoting $I=\{ 1\leq i\leq n; \gamma_i(\omega)= 1\}$, \eqref{2.17} can be rewritten as
\be\label{2.18}
\Big\Vert\Big(\sum_{i\in I} \Big|\Big(\sum_{|S|=k, S\cap I=\phi} \Gamma_Sf\Big) *\mu_i\Big|^2\Big)^{\frac 12}\Big\Vert_p.
\ee
Let
\be \label{2.19}
F=\sum_{|S| =k, S\cap I=\phi} \, \prod_{j\in S} (1-T_j) \ \prod_{j\not\in I\cup S} T_j
\ee
which, applying Lemma \ref{Lemma5} in the variable $(x_j)_{j\not\in I}$, satisfies
\be\label{2.20}
\Vert F\Vert_p \leq C_p^k\Vert f\Vert_p
\ee 
and
\be\label{2.21}
\eqref{2.18} =\Big\Vert\Big(\sum_{i\in I}\Big|\Big(\prod_{i\in I} T_i\Big) F*\mu_i\Big|^2\Big)^{\frac 12}\Big\Vert_p.
\ee
Assuming we dispose of an estimate
\be\label{2.22}
\Big\Vert\Big(\sum_{i\in I} \Big|\Big(\prod_{i\in I} T_i\Big) g*\mu_i\Big|^2\Big)^{\frac 12} \Big\Vert_{L^p(\operatornamewithlimits\otimes\limits_{i\in I} dx_i)}
\leq b_0\Vert g\Vert_{L^p(\operatornamewithlimits\otimes\limits_{i\in I} dx_i)}
\ee
for $g\in L^p(\operatornamewithlimits\otimes\limits_{i\in I} dx_i)$, it will follow from \eqref{2.20} that \eqref{2.18} is bounded by 
$b_0C_p^k\Vert f\Vert _p$.

For \eqref{2.15} we proceed similarly, taking $S=\{i\}\cup S'$, $|S'|=k-1$.

Instead of \eqref{2.18}, we get
\be\label{2.23}
\Big\Vert\Big( \sum_{i\in I} \Big|\Big(\sum_{|S'|=k-1, S'\cap I=\phi} \Gamma_{\{i\}\cup S'} f\Big)*\mu_i\Big|^2\Big)^{\frac 12}\Big\Vert_p.
\ee
Let
\be\label{2.24}
F=\sum_{|S'|=k-1, S'\cap I=\phi} \ \prod_{j\in S'} (1-T_j) \ \prod_{j\not\in I\cup S'} T_j
\ee
satisfying by Lemma \ref{Lemma5} 
\be\label{2.25}
\Vert  F\Vert_p\leq C_p^{k-1} \Vert f\Vert_p.
\ee
Then
\be\label{2.26}
\eqref{2.23} =\Big\Vert \Big(\sum_{i\in I} \big|\Gamma_i F*\mu_i\big|^2\Big)^{\frac 12}\Big\Vert_p
\ee
where $\Gamma_i=(1-T_i) \big(\prod\limits_{\substack{j\in I\\ j\not= i}} T_j\big)$.
Assuming an inequality
\be\label{2.27}
\Big\Vert\Big(\sum_{i\in I} \big|\Gamma_i g*\mu_i\big|^2\Big)^{\frac 12}\Big\Vert_{L^p(\operatornamewithlimits\otimes\limits_{i\in I} dx_i)}
\leq b_1\Vert g\Vert_{L^p(\operatornamewithlimits\otimes\limits_{i\in I} dx_i)}
\ee
will imply that \eqref {2.23} may be bounded by $b_1.C_p^{k-1}\Vert f\Vert_p$.

Summarizing, in view of \eqref{2.22} and \eqref{2.27}, we are finally reduced to establishing inequalities
\be\label{2.28}
\Big\Vert\Big(\sum^n_{i=1} |A_0f*\mu_i|^2\Big)^{\frac 12}\Big\Vert_p \leq b_0\Vert f\Vert_p
\ee
and
\be\label{2.29}
\Big\Vert\Big(\sum^n_{i=1} \big|\Gamma_i f*\mu_i\big|^2\Big)^{\frac 12}\Big\Vert_p \leq b_1\Vert f\Vert_p \ \text { with } \ \Gamma_i= (1-T_i)\prod_{j\not= i}
T_j
\ee
for suitable $b_0=b_0(R), b_1=b_1(R)$.
From the preceding, this will permit to estimate
\be\label{2.30}
\Big\Vert\Big(\sum|\mu_i*f|^2\Big)^{\frac 12}\Big\Vert_p\leq A_p (R)\Vert f\Vert_p
\ee
\vfill\break
with
\be\label{2.31}
\begin{aligned}
A_p(R)&< C(p, K)\big(1+b_0(R)+b_1(R)\big)\\
&= C(p, \ve)(1+b_0+b_1).
\end{aligned}
\ee
Bounds on $b_0, b_1$ will be obtained in Section 4.

\section
{An auxiliary class of operators}

The key inequality is \eqref{2.29} and we will deal with it using classical techniques from martingale theory.
This will require us to introduce some additional convolution operators that are approximately stable under small translation (note that the
function $\eta(x) =(1-|x|)_+$ introduced earlier does not have this property.)

Denote
\be\label{3.1}
\vp(x) =\frac c{1+x^4} \ \text { normalized s.t. $\int^\infty_{-\infty}\vp(x)dx=1$}.
\ee
Note that
\be\label{3.2}
\vp\lesssim \vp*\vp\lesssim\vp
\ee
and
\be\label{3.3}
|\hat\vp(\lambda)|< O(e^{-c|\lambda|}) \ \text { for } \ |\lambda|\to\infty
\ee
\be\label{3.4}
|1-\hat\vp(\lambda)|<O(\lambda^2).
\ee

Let $0<t_0\ll t=R^{-\ve}$ be another parameter (to specify) and denote $L_j$ the convolution in $x_j$ by $\vp_{t_0}$,
$\vp_{t_0}(x) =\frac 1{t_0} \vp\big(\frac x{t_0}\big)$.
Hence the $\{L_j\}$ are contractions on $L^p(\mathbb R^n)$, $1\leq p\leq\infty$.

\begin{lemma}\label{Lemma7}
Assume $q\in\mathbb Z_+$ a power of 2 and $f_1, \ldots, f_n\in L^q(\mathbb R^n)$
positive functions. Then
\begin{alignat}{1}\label{3.5}
\Vert L_2\ldots L_n f_1+\cdots&+ L_1\ldots L_{n-1} f_n\Vert_q\leq\notag\\
C_q\{\Vert (L_1\ldots L_n)&(f_1+\cdots+f_n)\Vert_q 
+\Vert(L_1\ldots L_n)(f_1^2+\cdots+f_n^2)\Vert^{\frac 12}_{\frac q2}\notag\\
&+\cdots+(\Vert f_1\Vert_q^q+\cdots\Vert f_n\Vert^q_q)^{\frac 1q}\}
\end{alignat}
$$
\leq C_q\Vert f_1+\cdots +f_n\Vert_q\qquad \quad \eqno{(3.5')}
$$
\end{lemma}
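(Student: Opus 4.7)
I would prove $(3.5)$ by induction on $m\ge 0$ with $q=2^m$, and derive $(3.5')$ as a corollary.  The base case $m=0$ is immediate:  since each $f_i\ge 0$ and each $L_j$ preserves $L^1$-mass, $\|\sum_i M_i f_i\|_1=\sum_i\|f_i\|_1$, matching the last term in $(3.5)$ at $q=1$.

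For the inductive step, the first move is a clean algebraic reduction.  Set $F:=\sum_i M_i f_i$ and $M:=L_1\cdots L_n$, and use the identity $M_i=M+M_i(1-L_i)$ to decompose
\[
F \;=\; M\Big(\sum_i f_i\Big)+G, \qquad G:=\sum_i M_i(1-L_i)f_i.
\]
Expanding $\int F^q=\int F^{q-1}M(\sum_i f_i)+\int F^{q-1}G$ and applying H\"older with dual exponents $q/(q-1), q$ to each piece (using $F\ge 0$ so $\|F^{q-1}\|_{q/(q-1)}=\|F\|_q^{q-1}$) gives $\|F\|_q\le A_0+\|G\|_q$, where I write $A_k:=\|M(\sum_i f_i^{2^k})\|_{q/2^k}^{1/2^k}$ for the $k$th term on the RHS of $(3.5)$.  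Thus everything reduces to proving $\|G\|_q\le C_q(A_1+\cdots+A_m)$.

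For this, use $\|G\|_q^2=\|G^2\|_{q/2}$ and the pointwise expansion $G^2=\sum_i(M_i d_i)^2+2\sum_{i<j}M_i d_i\cdot M_j d_j$ with $d_i:=(1-L_i)f_i$.  The diagonal is handled cleanly:  iterated Cauchy--Schwarz through the probability kernels $L_j$ gives $(M_i d_i)^2\le M_i(d_i^2)$, while the elementary inequality $d_i^2\le 2f_i^2+2L_i(f_i^2)$ yields $\sum_i(M_i d_i)^2\le 2\sum_i M_i(f_i^2)+2M(\sum_i f_i^2)$.  Applying the inductive hypothesis at exponent $q/2$ to the non-negative inputs $g_i:=f_i^2$ controls $\|\sum_i M_i(f_i^2)\|_{q/2}$ by $C_{q/2}\sum_{j=1}^m A_j^2$; taking a $1/2$-root and using $(\sum_j A_j^2)^{1/2}\le\sum_j A_j$ makes the diagonal contribute at most $C_q(A_1+\cdots+A_m)$ to $\|G\|_q$.

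The main obstacle is the off-diagonal sum $\sum_{i<j}M_i d_i\,M_j d_j$.  The structural key is that each $M_i d_i$ has vanishing integral in its $i$th variable (since $\int d_i\,dx_i=0$ and the $L_j$ with $j\ne i$ commute with integration in $x_i$), which is the one-coordinate ``martingale difference'' property of $d_i$ promised by the text.  The plan is to combine this vanishing with the stability property $(3.2)$---so that the repeated convolutions against $\varphi_{t_0}$ incurred in the cross-term analysis cost only absolute constants rather than constants depending on $n$---in an iterated decoupling Cauchy--Schwarz argument of Burkholder--Rosenthal type, thereby absorbing the off-diagonal into the diagonal estimate already obtained.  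With $\|G\|_q\le C_q\sum_{k=1}^m A_k$ in hand, the induction closes with $\|F\|_q\le C_q\sum_{k=0}^m A_k$.  Finally $(3.5')$ follows in one line:  $M$ is an $L^{q/2^k}$-contraction and $(\sum_i f_i^{2^k})^{1/2^k}\le\sum_i f_i$ pointwise, so each $A_k$ is bounded by $\|\sum_i f_i\|_q$.
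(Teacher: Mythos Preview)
Your inductive framework and the diagonal/off-diagonal split are in the right spirit, but the decisive off-diagonal step is left as a plan rather than an argument, and the mechanism you propose is not the one that actually carries the lemma. The vanishing $\int d_i\,dx_i=0$ is a genuine fact, but it does not give usable cancellation in $L^{q/2}$ for $\sum_{i<j}M_id_i\,M_jd_j$: the terms $M_id_i$ are not martingale differences with respect to any filtration (the $L_j$ are not projections, so $L_id_i\ne 0$), there is no natural ordering of the coordinates, and a naive Cauchy--Schwarz on the cross terms loses a factor of $n$. Invoking ``Burkholder--Rosenthal type'' together with $(3.2)$ does not close this gap without a concrete mechanism.

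The paper's proof bypasses the decomposition $M_i=M+M_i(1-L_i)$ entirely and uses no cancellation. One expands $\int F^q$ as a sum over ordered tuples $j_1\le\cdots\le j_q$ of $\int (L^{(j_1)}f_{j_1})\cdots(L^{(j_q)}f_{j_q})$ and splits only on whether $j_1=j_2$ or $j_1<j_2$. The $j_1=j_2$ part is dominated by $\|\sum_j L^{(j)}f_j^2\|_{q/2}\,\|F\|_q^{q-2}$ via H\"older, which is the inductive step down to $q/2$. For $j_1<j_2$ the point is purely positive: the factor $L^{(j_1)}f_{j_1}$ is the \emph{only} one lacking the convolution $L_{j_1}$, and the pointwise translation stability $\vp_{t_0}(y+\tau)\sim\vp_{t_0}(y)$ for $|\tau|\le t_0$ (stronger than $(3.2)$) lets one average in $x_{j_1}$ over $|\tau|\le t_0$ and thereby insert the missing $L_{j_1}$ at the cost of an absolute constant. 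After that insertion the whole $j_1<j_2$ block is bounded by $\|M(\sum f_j)\|_q\,\|F\|_q^{q-1}$, yielding $\|F\|_q\lesssim A_0+\|\sum_j L^{(j)}f_j^2\|_{q/2}^{1/2}$ and closing the induction. So the key idea is translation stability of $\vp$, not orthogonality of $(1-L_i)f_i$; your decomposition into $M(\sum f_i)+G$ discards exactly this option by putting the delicate part into signed terms $d_i$.
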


\begin{proof}

The statement is obvious for $q=1$.

In general, proceed by direct calculation of
\be\label{3.6}
\int_{\mathbb R^n} \Big(\sum_j L^{(j)} f_j\Big)^q \sim \sum_{j_1\leq j_2\leq \cdots\leq j_q}\, \int (L^{(j_1)} f_{j_1})\cdots (L^{(j_q)} f_{j_q})
\ee
denoting $L_1\ldots L_{\hat j}\ldots L_n=L^{(j)}$.

Using H\"older's inequality, the contribution of $j_1=j_2$ in \eqref{3.6} is bounded by
$$
\int\Big[\sum_j(L^{(j)} f_j)^2\Big]\, \Big[\sum_j L^{(j)} f_j\Big]^{q-2} \leq\Big\Vert\sum_j L^{(j)} f_j^2\Big\Vert_{\frac q2}\Big\Vert\sum_j L^{(j)} f_j\Big\Vert_q^{q-2}
$$
reducing $q$ to $\frac q2$.
For the $j_1<j_2$ contribution, proceed as follows.

We can assume $j_1 =1$ and rewrite the integral in the r.h.s of \eqref{3.6} as
\be\label{3.7}
\int_{\mathbb R^n} g_1(L_1g_2)\cdots(L_1 g_q)
\ee
with $g_1= L^{(1)} f_1$ etc.
Integration in $x_1$ gives
\be\label{3.8}
\int g_1(x_1) g_2(x_1-y_2)\cdots g_q(x_1-y_q) \vp_{t_0} (y_2)\cdots\vp_{t_0} (y_q) dx_1 dy_2\ldots dy_q.
\ee
Perform a translation $x_1\mapsto x_1+\tau$ with $|\tau|<t_0$ and use the property that $\vp_{t_0} (y+\tau)\sim \vp_{t_0}(y)$ for $|\tau|\leq t_0$. This gives that
$$
\eqref{3.8} \sim\int g_1 (x_1+\tau) g_2(x_1-y_2)\cdots g_q(x_1-y_q) \vp_{t_0}(y_2)\cdots \vp_{t_0} (y_q) dx_1 dy_2\ldots dy_q
$$
and averaging over $|\tau|\leq t_0$
$$
\eqref{3.7} \lesssim \int_{\mathbb R^n} (L_1g_1)(L_1g_2)\cdots (L_1 g_q).
$$
Thus the $j_1<j_2$ contribution in \eqref{3.6} may be estimated by
$$
\begin{aligned}
&\int(L_1\ldots L_n) \Big(\sum f_j\Big) \Big(\sum L^{(j)} f_j\Big)^{q-1}\lesssim\\
&\Vert(L_1\ldots L_n) \Big(\sum f_j\Big)\Big\Vert_q \, \Big\Vert\sum L^{(j)} f_j\Big\Vert^{q-1}_q
\end{aligned}
$$
proving the Lemma.
\end{proof}

Recall the definition of $\mu_i=\partial_{x_i} (1_B*H_{\frac 1R})$.
Using Lemma \ref{Lemma7}, we prove
\begin{lemma}\label{Lemma8}
For $q$ as above and $f_1, \ldots, f_n \in L^q(\mathbb R^n)$
\be\label{3.9}
\Big\Vert\Big[ \sum^n_{i=1} |(\partial_{x_i} 1_B)*L^{(i)} f_i\Big|^2\Big]^{\frac 12}\Big\Vert_g \leq c_q R^{8\ve}\Big\Vert\Big(\sum^n_{i=1} |f_i|^2\Big)^{\frac 12}\Big\Vert_q.
\ee
\end{lemma}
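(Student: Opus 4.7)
The plan is to identify the convolution $(\partial_{x_i} 1_B)\ast L^{(i)} f_i$ as a concrete smoothed edge operator applied to a discrete $x_i$-derivative of $f_i$, and then to reduce the vector-valued square-function inequality (3.9) to the scalar sum estimate of Lemma~7 via Jensen's inequality.

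\textbf{Step 1 (Identify the operator).} Using the product structure $1_B=\mathbf 1_{[-1/2,1/2]}(x_i)\,\mathbf 1_{[-1/2,1/2]^{n-1}}(x_{\hat i})$ and the fact that $L^{(i)}$ acts only in the variables $x_j$, $j\ne i$, a direct computation yields
\[
(\partial_{x_i}1_B)\ast L^{(i)}f_i(x)\;=\;\widetilde L^{(i)}g_i(x),\qquad g_i(x):=f_i(x_{\hat i},x_i+\tfrac12)-f_i(x_{\hat i},x_i-\tfrac12),
\]
where $\widetilde L^{(i)}$ denotes convolution in each of the variables $x_j$, $j\ne i$, against the probability kernel $\Psi:=\mathbf 1_{[-1/2,1/2]}\ast \varphi_{t_0}$. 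Crucially, $\Psi$ itself factors as an inner smoothing composed with a box, so $\widetilde L^{(i)}=L^{(i)}\circ M^{(i)}$ where $M^{(i)}$ is convolution with $\mathbf 1_{[-1/2,1/2]^{n-1}}$ in $x_{\hat i}$.

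\textbf{Step 2 (Jensen reduction and Lemma~7).} By positivity we may assume $f_i\ge 0$, and the translation invariance of $\|\cdot\|_q$ together with the pointwise bound $|g_i|^2\le 2\bigl(f_i^2(\cdot+e_i/2)+f_i^2(\cdot-e_i/2)\bigr)$ reduces matters to estimating $\bigl\|\bigl(\sum_i(\widetilde L^{(i)}f_i)^2\bigr)^{1/2}\bigr\|_q$. Since $\Psi^{\otimes(n-1)}$ is a probability density in $x_{\hat i}$, Jensen's inequality gives $(\widetilde L^{(i)}f_i)^2\le\widetilde L^{(i)}(f_i^2)$. Writing $h_i:=f_i^2$, the task reduces to showing
\[
\Big\|\sum_i\widetilde L^{(i)}(h_i)\Big\|_{q/2}\;\lesssim\;R^{16\varepsilon}\,\Big\|\sum_i h_i\Big\|_{q/2}.
\]
Factoring $\widetilde L^{(i)}=L^{(i)}M^{(i)}$ and applying Lemma~7 at exponent $q/2$ bounds the left-hand side by $C_{q/2}\bigl\|\sum_i M^{(i)}h_i\bigr\|_{q/2}$, which transfers the whole issue to a scalar $L^{q/2}$ estimate for the box-average operators.

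\textbf{Step 3 (The main obstacle: the box averages).} What remains is the estimate $\|\sum_i M^{(i)}h_i\|_{q/2}\lesssim R^{16\varepsilon}\|\sum_i h_i\|_{q/2}$ for $h_i\ge 0$. This is the delicate step, since $M^{(i)}$ is a genuine box average at unit scale in $n-1$ transverse directions — precisely the dimension-sensitive piece the entire paper is trying to avoid. The plan is to discretize $\mathbf 1_{[-1/2,1/2]}$ as a weighted sum of $\sim 1/t_0$ translates of $\varphi_{t_0}$, which expresses $M^{(i)}$ as a weighted sum of shifted $L^{(i)}$ operators. Lemma~7 applies to each shift with a constant independent of the translation; combining these and exploiting the approximate translation invariance $\Psi(y+\tau)\sim\Psi(y)$ for $|\tau|\lesssim t_0$ (which follows from $|\partial_y\Psi|\lesssim 1/t_0$), the huge combinatorial sum can be controlled by polynomial-in-$R$ rather than dimension-dependent losses. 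Choosing $t_0$ as an appropriate power $R^{-\alpha\varepsilon}$, careful bookkeeping of the exponents yields the $R^{16\varepsilon}$ loss, whence the $R^{8\varepsilon}$ in (3.9) follows after the square-root from Step~2.
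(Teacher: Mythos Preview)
Your Steps 1 and 2 are essentially the paper's opening moves (the pointwise bound $|\partial_i 1_B * L^{(i)} f_i|^2 \le C\,L^{(i)}\tau_i(|f_i|^2*1_{B^{(i)}})$ is exactly what is used there), but Step 3 contains a genuine gap. You are left with
\[
\Bigl\|\sum_i M^{(i)} h_i\Bigr\|_{q/2}\;\lesssim\;R^{C\varepsilon}\Bigl\|\sum_i h_i\Bigr\|_{q/2},
\]
where $M^{(i)}$ is the unit-box average in the $n-1$ variables $x_{\hat i}$. Your discretization writes $M^{(i)}$ as a convex combination of shifted copies of $L^{(i)}$, but the shifts range over $[-\tfrac12,\tfrac12]^{n-1}$, i.e.\ are of size $\sim 1$ in each of $n-1$ directions. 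The proof of Lemma~7 rests precisely on the comparability $\varphi_{t_0}(y+\tau)\sim\varphi_{t_0}(y)$ for $|\tau|\le t_0$; for $|\tau|\sim 1$ one pays a factor $t_0^{-4}$ \emph{per coordinate}, so your ``huge combinatorial sum'' costs $(1/t_0)^{C(n-1)}$. Nothing in your sketch explains how this exponential-in-$n$ loss is avoided, and indeed an analogue of Lemma~7 with $1_{[-1/2,1/2]}$ in place of $\varphi_{t_0}$ is exactly the kind of statement the Section~3 machinery was designed to sidestep.

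The paper avoids Step 3 altogether. Rather than applying the weak form $(3.5')$ and being stuck with $\sum_i M^{(i)} h_i$, it applies the \emph{full} expansion $(3.5)$ of Lemma~7 directly to the functions $\tau_i(|f_i|^2*1_{B^{(i)}})$. Each term in $(3.5)$ then carries the \emph{complete} product $L_1\cdots L_n$, so that in the $i$-th summand the operator acting in the $x_i$-variable is $L_i\tau_i$ rather than a bare shift. The decisive one-line trick is the pointwise domination $(3.12)$,
\[
L_i\tau_i\;\le\;C\,t_0^{-4}\,L_i\;\le\;C\,R^{C\varepsilon}\,L_i\,(*\,1_{[-1/2,1/2]}),
\]
which inserts the missing $i$-th box factor at only polynomial-in-$R^{\varepsilon}$ cost. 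After this the operator becomes $(L_1\cdots L_n)(*1_B)$, \emph{independent of $i$}, and the sum over $i$ passes inside a single contraction. This ``complete the box through $L_i$'' manoeuvre is the idea your Step 3 lacks; without it, the reduction leaves you with genuinely $i$-dependent box averages that you cannot control dimension-free.
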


\begin{proof}

Note that
$$
|\partial_i 1_B|\leq (\delta_{\frac 12}+\delta_{-\frac 12})(x_i) . 1_{B^{(i)}}
$$
with $B^{(i)} =\prod_{j\not= i} \big[-\frac 12, \frac 12\big]\subset\mathbb R^{n-1}$.

Thus
$$
\sum_i|\partial_i 1_B*L^{(i)} f_i|^2 \leq\sum_i L^{(i)} \tau_i \big(|f_i|^2 *1_{B^{(i)}}\big)
$$
with $\tau_i$ the shift $x_i\mapsto x_i\pm \frac 12$ and we evaluate the $L^{\frac q2}$-norm applying \eqref{3.5}.
This gives the expressions
\be\label{3.10}
\Big\Vert\sum_i(L_1\ldots L_n) [\tau_i(|f_i|^2*1_{B^{(i)}})\Big]^{2^s}\Big\Vert_{\frac q{2^{s+1}}}
\ee
with $1\leq 2^s \leq\frac q2$ and
\be\label{3.11}
\eqref{3.10} \leq\Big\Vert\sum_i L_1\ldots L_n\tau_i(*1_{B^{(i)}}) (|f_i|^{2^{s+1}})\Big\Vert_{\frac q{2^{s+1}}}.
\ee

Next, observe that since $\vp_{t_0} (x+\tau)\leq C t_0^{-4} \vp_{t_0}(x)$, we have
\be\label{3.12}
L_i\tau_i< CR^{4\ve} L_i < CR^{8\ve} L_i(*1_{[-\frac 12, \frac 12]})
\ee
with convolution in the $x_i$-variable.
Therefore
$$
\begin{aligned}
\eqref{3.11} &\leq CR^{8\ve}\Big\Vert\sum_i (L_1\ldots L_n)(|f_i|^{2^{s+1}} *1_B)\Big\Vert_{\frac q{2^{s+1}}}\\
&\leq CR^{8\ve} \Big\Vert\Big(\sum|f_i|^2\Big)^{\frac 12}\Big\Vert_q
\end{aligned}
$$
proving Lemma \ref{Lemma8}.
\end{proof}

As a corollary

\begin{lemma}\label{Lemma9}
\be\label {3.13}
\Big\Vert\Big(\sum^n_{i=1} |\mu_i*L^{(i)} f_i|^2\Big)^{\frac 12}\Big\Vert_q \leq C_q R^{8\ve}\Big\Vert\Big(\sum |f_i|^2\Big)^{\frac 12}\Big\Vert_q
\ee
\end{lemma}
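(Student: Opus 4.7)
The plan is to deduce Lemma \ref{Lemma9} directly from Lemma \ref{Lemma8} by absorbing the Gaussian smoothing $H_{1/R}$ into the source functions $f_i$. The key observation is that $\hat H(\xi)=e^{-|\xi|^2}=\prod_{j=1}^n e^{-\xi_j^2}$ factors as a tensor product, so $H_{1/R}$ itself decomposes as a composition of one-dimensional convolution operators $G_1G_2\cdots G_n$, where each $G_j$ is convolution in the single variable $x_j$ by a one-dimensional Gaussian. Since each $G_j$ and each $L_k$ acts as a convolution in a single coordinate, they all mutually commute. Writing $\mu_i=(\partial_{x_i}1_B)*H_{1/R}$ and using this commutation, I would rewrite
$$
\mu_i * L^{(i)} f_i \;=\; (\partial_{x_i}1_B) * H_{1/R} * L^{(i)} f_i \;=\; (\partial_{x_i}1_B)*L^{(i)}\bigl(H_{1/R}*f_i\bigr) \;=\; (\partial_{x_i}1_B)*L^{(i)} h_i,
$$
where $h_i := H_{1/R}*f_i$.

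Next I would apply Lemma \ref{Lemma8} to the family $(h_i)$, which immediately yields
$$
\Big\Vert\Big(\sum_{i=1}^n |\mu_i*L^{(i)} f_i|^2\Big)^{\frac 12}\Big\Vert_q \;\leq\; C_q R^{8\ve}\Big\Vert\Big(\sum_{i=1}^n |h_i|^2\Big)^{\frac 12}\Big\Vert_q.
$$
To close the loop I would verify that the Gaussian smoothing is harmless inside the square-function norm. Since $H\geq 0$ and $\int H=\hat H(0)=1$, the kernel $H_{1/R}$ is a nonnegative probability density. Minkowski's inequality in $\ell^2$ applied pointwise under the convolution integral gives
$$
\Big(\sum_i |h_i(x)|^2\Big)^{\frac 12} \;\leq\; H_{1/R}*\Big(\sum_i |f_i|^2\Big)^{\frac 12}(x),
$$
and contractivity of convolution with a probability density on $L^q$ then bounds the right-hand side by $\Vert(\sum_i|f_i|^2)^{1/2}\Vert_q$, completing the proof.

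There is essentially no obstacle here; this is why the excerpt frames Lemma \ref{Lemma9} as a corollary of Lemma \ref{Lemma8}. The only structural fact one needs beyond Lemma \ref{Lemma8} is the tensor-product factorization of $H_{1/R}$ and the resulting commutation with the $L_j$'s, and then standard vector-valued contraction properties of convolution with a probability kernel. Note also that no additional loss in $R$ is incurred by the smoothing step, so the exponent $R^{8\ve}$ is inherited unchanged from Lemma \ref{Lemma8}.
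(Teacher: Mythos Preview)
Your proposal is correct and is precisely the argument the paper has in mind when it states Lemma~\ref{Lemma9} ``as a corollary'' of Lemma~\ref{Lemma8}: write $\mu_i=(\partial_{x_i}1_B)*H_{1/R}$, commute the Gaussian past $L^{(i)}$, apply Lemma~\ref{Lemma8} to $h_i=H_{1/R}*f_i$, and then use that convolution with the probability density $H_{1/R}$ is a contraction on $L^q(\ell^2)$. The tensor factorization of $H_{1/R}$ you mention is true but not actually needed---commutativity of convolutions suffices---and no further idea beyond this is required.
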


\section
{Completion of the proof}

Return to inequalities \eqref{2.28}, (2.29).
We may assume $p$ a power of 2.
Set
\be\label{4.1}
t_0=R^{-3\ve}
\ee
and let $\{L_j\}$ and $\{L^{(i)}\}$ be the operators introduced in Section 3.

Consider first (2.28) and estimate
\begin{alignat}{1} \label{4.2} 
&\Big\Vert\Big(\sum_i |A_0f*\mu_i|^2\Big)^{\frac 12}\Big\Vert_p\leq\notag\\
&\Big\Vert\Big(\sum_i |A_of*H_{t_0} *\mu_i|^2\Big)^{\frac 12} \Big\Vert_p
\\
&+ \Big\Vert \Big(\sum_i|A_0(1-H_{t_0}) f* \mu_i|^2\Big)^{\frac 12}\Big\Vert_p.
\end{alignat}

Since $H_{t_0}*\mu_i=\partial_i H_{t_0} *H_{\frac 1R}*1_B$,
\be
\label{4.4}
\eqref{4.2} \leq\Big\Vert\Big( \sum_i |\partial_i H_{t_0} * 1_B*f|^2\Big)^{\frac 12} \Big\Vert_p< C t_0^{-1} \Vert f\Vert_p
\ee using interpolation between $p=2$, $p=\infty$ and recalling (1.25)-(1.28).

It follows from the definition of $A_p$ in \eqref{2.30} that
\be\label{4.5}
(4.3)\leq A_p\Vert A_0(1-H_{t_0}) f\Vert_p
\ee
and we estimate $\Vert A_0(1-H_{t_0})\Vert_p$ by interpolation.

Obviously
$$
\Vert A_0(1-H_{t_0})\Vert_\infty \lesssim \Vert A_0\Vert_\infty (1+\Vert H_{t_0}\Vert_\infty)=2
$$
while for $p=2$, we need to bound the multiplier
\be\label{4.6}
\prod^n_{i=1} \hat\eta (t\xi_i)( 1- e^{-t_0^2|\xi|^2}).
\ee
Since $|\hat\eta (\lambda)|< C\lambda^{-2}$, certainly
$$
\eqref{4.6} < CR^{2\ve} (\max |\xi_i|)^{-2}< CR^{-\ve}
$$
unless $\max |\xi_i|< R^{2\ve}$.

Also $|\hat\eta (\lambda)|< e^{-C\lambda^2} $ for $|\lambda|<1$ and hence
$$
\prod^n_{i=1} |\hat\eta(t\xi_i)|< e^{-ct^2 (\sum_{|\xi_i|<R^\ve} \xi_i^2) -c|I_1|}
$$
with $I_1=\{i\leq n; |\xi_i|\geq R^\ve\}$.
Thus also $\eqref{4.6}<R^{-\ve}$ unless
$$
\sum_{|\xi_i|< R^\ve}\xi_i^2 <CR^{2\ve}\log R \text { and } \ |I_1|< C\log R
$$
and we can assume
$$
|\xi|^2 < CR^{2\ve}\log R+|I_1|R^{4\ve} < CR^{4\ve}\log R.
$$
But then
$$
1-e^{-t_0^2|\xi|^2} \lesssim t_0^2 R^{4\ve} \log R<R^{-\ve}
$$
by \eqref{4.1}. This proves that
$$
\eqref{4.6} < CR^{-\ve}
$$
and consequently
\be\label{4.7}
\Vert A_0(1-H_{t_0})\Vert_2< CR^{-\ve}.
\ee
Interpolation with $p=\infty$ gives
\be\label{4.8}
\Vert A_0(1-H_{t_o})\Vert_p< CR^{-\frac {2\ve}p}
\ee
and
\be\label{4.9}
\eqref{4.5}\leq C A_p R^{-\frac {2\ve}p}\Vert f\Vert_p.
\ee
From \eqref{4.4}, \eqref{4.9}, we find
\be\label{4.10}
b_0<CR^{3\ve}+ CA_p R^{-\frac {2\ve}p}.
\ee
Consider next \eqref{2.29} and estimate
\begin{alignat}{1}\label{4.11}
\Big\Vert \Big(\sum\big|\Gamma_i f* \mu_i\big|^2\Big)^{\frac 12} \Big\Vert_p &\leq\Big\Vert\Big(\sum_i 
\big|\Gamma_i L^{(i)} f*\mu_i\big|^2\Big)^{\frac 12}
\Big\Vert_p  \\
&+\Big\Vert\Big(\sum_i \big|\Gamma_i(1-L^{(i)})f*\mu_i\big|^2\Big)^{\frac 12}\Big\Vert_p.
\label{4.12}
\end{alignat}
Application of Lemma \ref{Lemma9} with $f_i =\Gamma_i f$ gives
\be
\label{4.13}
\eqref{4.11} \leq CR^{8\ve}\Big\Vert \Big(\sum\big|\Gamma_i f\big|^2\Big)^{\frac 12}
\Big\Vert_p.
\ee
Note that application of Lemma \ref{Lemma5} to a subset of the variables implies that for any $I\subset\{1, \ldots, n\}$,
$$
\Big\Vert\sum_{i\in I} \Gamma_if\Big\Vert_p \leq C_p\Vert f\Vert_p\qquad (1<p<\infty).
$$
Hence
\be\label{4.14}
\Big\Vert\Big(\sum \big |\Gamma_i f\big|^2\Big)^{\frac 12}\Big\Vert_p \leq C_p\Vert f\Vert_p
\ee
and
\be\label{4.15}
\eqref{4.11} \leq C_pR^{8\ve}.
\ee

Evaluate
\be\label{4.16}
\eqref{4.12} \leq \mathbb E_\ve\Big[\Big\Vert\Big(\sum_i |F_\ve*\mu_i|^2\Big)^{\frac 12}\Big\Vert_p\Big]
\ee
where $\ve\in \{1, -1\}^n$ and $F_\ve=\sum_i\ve_i\Gamma_i(1-L^{(i)}) f$.

From (2.30)
\begin{alignat}{1}\label{4.17} 
\eqref{4.16} &\leq A_p\mathbb E_\ve [\Vert F_\ve\Vert_p]\notag\\
&\leq C_p A_p \Big\Vert\Big(\sum_i\big|\Gamma_i(1-L^{(i)}) f
\big|^2\Big)^{\frac 12}\Big\Vert_p.
\end{alignat}

By \eqref{4.14} and (3.5'), it follows that
\begin{alignat}{1}
\Big\Vert\Big(\sum\big|\Gamma_i(1-L^{(i)}) f\big|^2\Big)^{\frac 12}\big\Vert_p &\leq \big\Vert\Big(\sum\big|\Gamma_i f\big|^2\Big)^{\frac 12}\Big\Vert_p
+\Big\Vert\sum L^{(i)} |\Gamma_i f|^2 \Big\Vert^{\frac 12}_{\frac p2}\\ \notag
&\leq C_p \Vert f\Vert_p
\end{alignat}
and we interpolate again with an $L^2$-bound.
The latter is obtained by bounding the multiplier
\begin{alignat}{1}\label{4.19}
&\sum_i|\hat\Gamma_i(\xi)|^2 \ |1- \widehat{L^{(i)}} (\xi)|^2= \notag\\
& \sum_i|1-\hat\eta (t\xi_i)|^2 \prod_{j\not=i}|\hat\eta (t\xi_j)|^2 \ | 1-\prod_{j\not= i}\hat\vp (t_0\xi_j)|^2\leq\notag\\
&\max_i\prod_{j\not= i} \hat\eta(t\xi_j) |1-\prod_{j\not=i}\hat\vp (t_0\xi_j)|.
\end{alignat}
Since $|1-\hat\vp(\lambda)|< C\lambda^2$ by \eqref{3.4}, \eqref{4.19} may be estimated as \eqref{4.6} and
hence
\be\label{4.20}
\Big\Vert \Big(\sum\big|\Gamma_i(1-L^{(i)})f\big|^2\Big)^{\frac 12}\Big\Vert_2 \leq CR^{-\ve} \Vert f\Vert_2
\ee
\be\label{4.21}
\Big\Vert\Big(\sum\big|\Gamma_i(1-L^{(i)} ) f\big|^2\Big)^{\frac 12}\Big\Vert_p\leq CR^{-\ve\frac 2p}\Vert f\Vert_p.
\ee
From \eqref{4.15}, \eqref{4.21}, we may take
\be\label{4.22}
b_1< C_p R^{8\ve}+ C_pA_pR^{-\frac 2p\ve}.
\ee
Finally, from \eqref{2.31}, \eqref{4.10}, \eqref{4.22}, we deduce
\begin{alignat}{1}\label{4.23}
A_p(R)&< C(p, \ve) (R^{8\ve}+A_p R^{-\frac 2p\ve})\notag\\
A_p(R)&<C_p (\ve) R^{8\ve}.
\end{alignat}
This completes the proof of Lemma \ref{Lemma3} and the Theorem.


\begin{thebibliography}
{xxxxxxx}

\bibitem
[Al]{Al} J.M.~Aldaz, \emph {The weak type $(1, 1)$ bounds for the maximal function associated to cubes grow to infinity with the dimension},
Annals of Math (2), {\bf 173} (2011), n2, 1013--1023.

\bibitem
[Au]{Au} G.~Aubrun, \emph {Maximal inequality for high dimensional cubes}, arXiv:0902.4305.

\bibitem
[B1]{B1} J.~Bourgain, \emph{On high dimensional maximal functions associated to convex bodies}, Amer.J.~Math., {\bf 108} (1986), 1467--1476.

\bibitem[B2]{B2} J.~Bourgain, \emph{On the $L^p$-bounds for maximal functions associated to convex bodies in $R^n$}, Israel J.~Math., {\bf 54} (1986), 257--265.

\bibitem [C]{C} A.~Carbery, \emph{ Radial Fourier multipliers and associated maximal functions}, Recent Progress in Fourier Analysis, Noth-Holland Math. Studies,
{\bf 111} (1985), 49--56.

\bibitem[M]{M} D.~M\"uller, \emph{A geometric bound for maximal functions associated to convex bodies}, Pacific J.~Math., Vol. {\bf 142}, N2 (1990).

\bibitem [N-T]{N-T} A.~Naor, T.~Tao, \emph{Random martingales and localization of maximal inequalities}, JFA 259 (2010), no 3, 731--779.

\bibitem[P]{P} G.~Pisier, \emph{Holomorphic semi-groups and the geometry of Banach spaces}, Annals of Math., {\bf 115} (1982), 375--392.

\bibitem
[St1]{St1} E.~Stein, \emph{The development of square functions in the work of A.~Zygmund}, Bull. Amer. Math. Soc., {\bf 7} (1982), 359--376.

\bibitem
[St2]{St2} E.~Stein, 
\emph {Three variations on the theme of maximal functions}, Recent Progress in Fourier Analysis, North-Holland Math. Studies {\bf 111} (1985),
49--56.

\bibitem[S-S]{S-S} E.M.~Stein, J.O.~Str\"omberg, \emph{Behavior of maximal functions in $\mathbb R^n$ for large $n$}, Ark. F. Mat., {\bf 21} (1983), 259--269.


\end{thebibliography}
\end{document}